\documentclass[a4paper, intlimits, reqno]{amsart}

\usepackage[english]{babel}
\usepackage[T1]{fontenc}
\usepackage[utf8]{inputenc}
\usepackage{graphicx}
\usepackage{caption}
\usepackage{amsmath}
\usepackage{amssymb}
\usepackage{MnSymbol}
\usepackage{amsthm}
\usepackage{mathrsfs} 
\usepackage{mathtools}
\usepackage{color}
\usepackage{pgfplots}
\usepackage{epstopdf}
\usepackage{listings}
\usepackage{enumitem}
\usepackage{multicol}
\usepackage{url}
\usepackage{dsfont}
\usepackage[numbers]{natbib}
\usepackage{prettyref}
\pdfminorversion=7
\pgfplotsset{compat=1.14}

\newrefformat{def}{Definition \ref{#1}}
\newrefformat{rem}{Remark \ref{#1}}
\newrefformat{sect}{Section \ref{#1}}
\newrefformat{prop}{Proposition \ref{#1}}
\newrefformat{thm}{Theorem \ref{#1}}
\newrefformat{ex}{Example \ref{#1}}
\newrefformat{fig}{Figure \ref{#1}}
\newrefformat{app}{Appendix \ref{#1}}

\swapnumbers
\newtheoremstyle{dotless}{}{}{\itshape}{}{\bfseries}{}{}{}
\theoremstyle{dotless}
\theoremstyle{plain}
\newtheorem{thm}{Theorem}[section]

\newtheorem{prop}[thm]{Proposition}

\theoremstyle{definition}

\newtheorem{rem}[thm]{Remark}

\newcommand{\N} {\mathbb{N}}

\newcommand{\R} {\mathbb{R}}
\newcommand{\C} {\mathbb{C}}
\newcommand{\K} {\mathbb{K}}

\definecolor{mygreen}{rgb}{0,0.6,0}
\definecolor{mygray}{rgb}{0.5,0.5,0.5}
\definecolor{mymauve}{rgb}{0.58,0,0.82}

\lstset{ %
  backgroundcolor=\color{white},   
  basicstyle=\footnotesize,        
  breakatwhitespace=false,         
  breaklines=true,                 
  captionpos=b,                    
  commentstyle=\color{mygreen},    
  deletekeywords={...},            
  escapeinside={\%*}{*)},          
  extendedchars=true,              
  frame=single,                    
  keepspaces=true,                 
  keywordstyle=\color{blue},       
  language=Matlab,                 
  morekeywords={*,...},            
  numbers=left,                    
  numbersep=5pt,                   
  numberstyle=\tiny\color{mygray}, 
  rulecolor=\color{black},         
  showspaces=false,                
  showstringspaces=false,          
  showtabs=false,                  
  stepnumber=2,                    
  stringstyle=\color{mymauve},     
  tabsize=2,                       
  title=\lstname                   
}  
  
\begin{document}

\title[Matrix equation]{On the solvability of the matrix equation $(1+ae^{-\frac{\|X\|}{b}})X=Y$}
\author[K.~Kruse]{Karsten Kruse}
\address{TU Hamburg \\ Institut f\"ur Mathematik \\
Am Schwarzenberg-Campus~3 \\
Geb\"aude E \\
21073 Hamburg \\
Germany}
\email{karsten.kruse@tuhh.de}

\subjclass[2010]{Primary 15A24, Secondary 15A60}

\keywords{matrix equations, matrix norm, Lambert W function}

\date{\today}
\begin{abstract}
The treated matrix equation $(1+ae^{-\frac{\|X\|}{b}})X=Y$ 
in this short note has its origin in a modelling approach to describe the nonlinear 
time-dependent mechanical behaviour of rubber. 
We classify the solvability of $(1+ae^{-\frac{\|X\|}{b}})X=Y$ in general normed spaces $(E,\|\cdot\|)$
w.r.t.\ the parameters $a,b\in\R$, $b\neq 0$, and give an algorithm to numerically compute its 
solutions in $E=\R^{m\times n}$, $m,n\in\N$, $m,n\geq 2$, equipped with the Frobenius norm. 
\end{abstract}

\maketitle

\section{Introduction}
In \cite{plagge2020} the common approach to extend hyperelastic models by a well-known Prony series is modified. 
In general, the classic approach using a Prony series for extension results in the need to identify a large number of parameters. 
The identification is usually an ill-posed problem. 
Therefore in \cite{plagge2020}, the authors restrict themselves to a single modified Prony element with a load-dependent relaxation time 
leading to an approach with only two parameters. 
Using an implicit Euler-approach (see \cite[Eq.\ (28), p.\ 8]{plagge2020}), 
solving the underlying matrix differential equation yields to 
\begin{equation}\label{eq:stress_discr}
\frac{\sigma_{\operatorname{v},k+1}-\sigma_{\operatorname{v},k}^{R}}{\Delta t}
=-\sigma_{\operatorname{v},k+1}
  \frac{\exp\bigl(\frac{\|\sigma_{\operatorname{v},k+1}\|}{\sigma_{c}}\bigr)}{\tau_{\operatorname{p}}}
 +\frac{\Delta \sigma_{\operatorname{el}}^{R}}{\Delta t},\quad k\in\N_{0},
\end{equation}
where $\sigma_{\operatorname{v}}\in\R^{3\times 3}$ is the deviatoric stress of the modified Prony element, 
$\sigma_{\operatorname{v},k}^{R}$ the rotated viscolelastic Cauchy stress, 
$\Delta\sigma_{\operatorname{el}}^{R}\in\R^{3\times 3}$ the incremental elastic driving stress, $\|\cdot\|$ the Frobenius norm, 
$\sigma_{c}>0$ the critical stress, $\tau_{\operatorname{p}}>0$ the relaxation timescale in the effective relaxation time 
and $\Delta t>0$ a time step.
Equation \eqref{eq:stress_discr} can be rewritten as 
\begin{equation}\label{eq:stress_discr_equiv}
\bigl(1+\tfrac{\Delta t}{\tau_{\operatorname{p}}}
\exp\bigl(\tfrac{\|\sigma_{\operatorname{v},k+1}\|}{\sigma_{c}}\bigr)\bigr)\sigma_{\operatorname{v},k+1}
=\Delta \sigma_{\operatorname{el}}^{R}+\sigma_{\operatorname{v},k}^{R},
\end{equation}
which has the general form 
\[
(1+a\exp(-\tfrac{\|X\|}{b}))X=Y
\]
with $X:=\sigma_{\operatorname{v},k+1}$, $Y:=\Delta \sigma_{\operatorname{el}}^{R}+\sigma_{\operatorname{v},k}^{R}$, 
$a:=\tfrac{\Delta t}{\tau_{\operatorname{p}}}$ and $b:=-\sigma_{c}$.

\section{Classification of the solvability in general normed spaces}
Let $(E,\|\cdot\|)$ be a normed space over the field $\K=\R$ or $\C$, $a,b\in\R$, $b\neq 0$, and $Y\in E$. 
We are searching for a solution $X\in E$ of the vector equation 
\begin{equation}\label{eq:matrix_eq}
(1+ae^{-\frac{\|X\|}{b}})X=Y.
\end{equation}
If we take norms on both sides of \eqref{eq:matrix_eq}, then we obtain the scalar equation
\begin{equation}\label{eq:norm_eq}
|1+ae^{-\frac{\|X\|}{b}}|\|X\|=\|Y\|.
\end{equation}

The solutions of \eqref{eq:matrix_eq} and \eqref{eq:norm_eq} are related in the following manner.

\begin{prop}\label{prop:relation_scalar_case}
Let $(E,\|\cdot\|)$ be a normed space, $Y\in E$ and $y:=\|Y\|$. 
\begin{enumerate}
\item[(a)] If $x\in[0,\infty)$ is a solution of $|1+ae^{-x/b}|x=y$ and $1+ae^{-x/b}\neq 0$,
then 
\begin{equation}\label{eq:solution_matrix_eq}
X:=(1+ae^{-x/b})^{-1}Y\in E
\end{equation}
fulfils $\|X\|=x$ and $X$ is a solution of $(1+ae^{-\frac{\|X\|}{b}})X=Y$.
\item[(b)] If $X\in E$ is a solution of $(1+ae^{-\frac{\|X\|}{b}})X=Y$, then $x:=\|X\|\in[0,\infty)$ is a solution of $|1+ae^{-x/b}|x=y$.
\end{enumerate}
\end{prop}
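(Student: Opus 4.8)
The plan is to exploit the absolute homogeneity of the norm, which lets the real scalar $1+ae^{-\|X\|/b}$ be pulled out of $\|\cdot\|$ as the factor $|1+ae^{-\|X\|/b}|$. I would dispatch (b) first, since it amounts to nothing more than taking norms. Starting from $(1+ae^{-\|X\|/b})X=Y$ and applying $\|\cdot\|$ to both sides, writing $x:=\|X\|\geq 0$, homogeneity gives $|1+ae^{-\|X\|/b}|\,\|X\|=|1+ae^{-x/b}|\,x=\|Y\|=y$, which is precisely the scalar equation $|1+ae^{-x/b}|\,x=y$. No case distinction is needed, and $x\in[0,\infty)$ is automatic.

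For (a) the task is to manufacture a genuine solution of the vector equation from a scalar solution $x$. The natural candidate is $X:=(1+ae^{-x/b})^{-1}Y$, which is well defined exactly because of the hypothesis $1+ae^{-x/b}\neq 0$. The crucial step, and the only one requiring care, is to verify that $\|X\|=x$: the exponent in the vector equation is $\|X\|$, not the given number $x$, so only after this identity is secured may one replace $\|X\|$ by $x$ inside the exponential. I would compute, again by homogeneity, $\|X\|=\|Y\|/|1+ae^{-x/b}|=y/|1+ae^{-x/b}|$, and then invoke the scalar equation $|1+ae^{-x/b}|\,x=y$ to rewrite the numerator, obtaining $\|X\|=x$. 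This one computation also covers the degenerate case $x=0$, where the scalar equation forces $y=0$, hence $Y=0$ and $X=0$, so that $\|X\|=0=x$ still holds.

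With $\|X\|=x$ established, closing (a) is immediate: substituting back yields $(1+ae^{-\|X\|/b})X=(1+ae^{-x/b})(1+ae^{-x/b})^{-1}Y=Y$, so $X$ indeed solves the vector equation. I expect the main difficulty to be conceptual rather than technical, namely recognising that the scalar equation is exactly the consistency condition that makes the ansatz $X=(1+ae^{-x/b})^{-1}Y$ self-reproducing, in the sense that its norm returns the very value $x$ that was fed into the exponential. Everything else reduces to a single use of the absolute homogeneity of $\|\cdot\|$ together with the invertibility hypothesis $1+ae^{-x/b}\neq 0$.
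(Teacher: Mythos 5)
Your proposal is correct and follows essentially the same route as the paper: part (b) by taking norms, and part (a) by using absolute homogeneity together with the scalar equation $|1+ae^{-x/b}|x=y$ to verify $\|X\|=x$, after which the substitution $(1+ae^{-\|X\|/b})X=(1+ae^{-x/b})(1+ae^{-x/b})^{-1}Y=Y$ closes the argument. The only difference is cosmetic: you spell out the final substitution and the (unnecessary but harmless) case $x=0$, which the paper leaves implicit.
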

\begin{proof}
Statement (b) is obvious, so we only need to prove (a), which follows from
\begin{align*}
\|X\|&=|(1+ae^{-x/b})^{-1}|\|Y\|=|(1+ae^{-x/b})^{-1}|y=|(1+ae^{-x/b})^{-1}||1+ae^{-x/b}|x\\
     &=x.
\end{align*}
\end{proof}

Hence we can use the solutions of the scalar equation \eqref{eq:norm_eq} to obtain the solutions 
of the vector equation \eqref{eq:matrix_eq}. \emph{Lambert's W function} will turn out to be 
a useful tool to solve the scalar equation \eqref{eq:norm_eq} with respect to the parameters involved.
In the following we denote by $W_{0}\colon[-e^{-1},\infty)\to[-1,\infty)$ the upper branch 
and by $W_{-1}\colon[-e^{-1},0)\to(-\infty,-1]$ the lower branch of Lambert's W function. These 
branches are bijective functions and the union of the sets $\{W_{0}(z)\;|\;z\in[-e^{-1},\infty)\}$ 
$\{W_{-1}(z)\;|\;z\in[-e^{-1},0)\}$ is the set of all solutions $x\in\R$ of the equation $xe^{x}=z$ 
(see e.g.\ \cite{Corless1996}). 

\begin{prop}\label{prop:scalar_case}
Let $a,b\in\R$, $b\neq 0$, and $y\in[0,\infty)$. The equation 
\begin{equation}\label{eq:scalar_eq}
 |1+ae^{-x/b}|x=y
\end{equation}
has in $[0,\infty)$
\begin{enumerate} 
 \item[(a)] a unique solution if $a\geq 0$ and $b<0$,
 \item[(b)] a unique solution if $a\leq -1$ and $b<0$,
 \item[(c)] two solutions $x=0$ or $x=b\ln(|a|)$ if $-1<a<0$, $b<0$ and $y=0$,
 \item[(d)] three solutions if $-1<a<0$, $b<0$ and $0<y<(1+ae^{W_{0}(-e/a)-1})b(1-W_{0}(-e/a))$,
 \item[(e)] two solutions if $-1<a<0$, $b<0$ and $y=(1+ae^{W_{0}(-e/a)-1})b(1-W_{0}(-e/a))$,
 \item[(f)] a unique solution if $-1<a<0$, $b<0$ and $y>(1+ae^{W_{0}(-e/a)-1})b(1-W_{0}(-e/a))$,
 \item[(g)] a unique solution if $0\leq a\leq e^{2}$ and $b>0$,
 \item[(h)] a unique solution if $a>e^{2}$, $b>0$ and $y<(1+ae^{W_{-1}(-e/a)-1})b(1-W_{-1}(-e/a))$ 
 or $y>(1+ae^{W_{0}(-e/a)-1})b(1-W_{0}(-e/a))$,
 \item[(i)] two solutions if $a>e^{2}$, $b>0$ and $y=(1+ae^{W_{-1}(-e/a)-1})b(1-W_{-1}(-e/a))$ 
 or $y=(1+ae^{W_{0}(-e/a)-1})b(1-W_{0}(-e/a))$,
 \item[(j)] three solutions if $a>e^{2}$, $b>0$ and $(1+ae^{W_{-1}(-e/a)-1})b(1-W_{-1}(-e/a))<y<(1+ae^{W_{0}(-e/a)-1})b(1-W_{0}(-e/a))$,
 \item[(k)] a unique solution if $-1\leq a<0$ and $b>0$,
 \item[(l)] two solutions $x=0$ or $x=b\ln(|a|)$ if $a<-1$, $b>0$ and $y=0$,
 \item[(m)] three solutions if $a<-1$, $b>0$ and $0<y<-(1+ae^{W_{0}(-e/a)-1})b(1-W_{0}(-e/a))$,
 \item[(n)] two solutions if $a<-1$, $b>0$ and $y=-(1+ae^{W_{0}(-e/a)-1})b(1-W_{0}(-e/a))$,
 \item[(o)] a unique solution if $a<-1$, $b>0$ and $y>-(1+ae^{W_{0}(-e/a)-1})b(1-W_{0}(-e/a))$.
\end{enumerate}
\end{prop}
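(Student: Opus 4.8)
The plan is to study the auxiliary function $g\colon[0,\infty)\to\R$, $g(x):=(1+ae^{-x/b})x$, and to observe that since $x\geq 0$ the left-hand side of \eqref{eq:scalar_eq} equals $|1+ae^{-x/b}|\,x=|g(x)|$. Thus, for fixed $y\geq 0$, counting solutions of \eqref{eq:scalar_eq} amounts to counting the points of the level set $\{x\geq 0\;|\;|g(x)|=y\}$, and it suffices to determine the qualitative graph of $f:=|g|$ on $[0,\infty)$. I would first record the two pieces of information that are independent of the fine analysis. Since $e^{-x/b}>0$, the factor $1+ae^{-x/b}$ vanishes on $[0,\infty)$ precisely when $a<0$ and $e^{-x/b}=-1/a$, i.e.\ at $x_{0}:=b\ln|a|$, which lies in $(0,\infty)$ exactly when $b\ln|a|>0$; together with the zero at $x=0$ this fixes the sign pattern of $g$. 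A short computation of $g$ as $x\to\infty$ shows $|g(x)|\to\infty$ in every regime, so by continuity every large $y$ admits at least one solution.

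The heart of the argument is the critical-point analysis. Differentiating gives $g'(x)=1+ae^{-x/b}(1-x/b)$, and the substitution $t:=1-x/b$ (so $x=b(1-t)$) turns $g'(x)=0$ into $a\,t\,e^{t-1}=-1$, that is
\[
te^{t}=-\tfrac{e}{a}.
\]
This is exactly the defining equation of Lambert's W function, so the critical points of $g$ are the $x=b(1-t)$ with $t\in\{W_{0}(-e/a),W_{-1}(-e/a)\}$ whenever these branches are defined. I would then read off, from the ranges of $W_{0}$ and $W_{-1}$ and from $W_{0}(e)=1$, exactly when $-e/a$ lies in $[-e^{-1},0)$, equals $-e^{-1}$, lies in $(0,e)$, equals $e$, or lies in $(e,\infty)$; comparing the resulting $t$ with $1$ and using $x=b(1-t)$ together with the sign of $b$ decides whether each critical point actually lies in $(0,\infty)$. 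At a critical point one has $1+ae^{-x/b}=(t-1)/t$, whence the critical value simplifies to $g(b(1-t))=(1+ae^{t-1})b(1-t)=-b(1-t)^{2}/t$; this is the computation that produces the threshold constants $(1+ae^{W(-e/a)-1})b(1-W(-e/a))$ in the statement, and its sign (that of $-b/t$) tells us whether the critical point sits on the region where $g>0$ or $g<0$.

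With these ingredients I would split into $b<0$ and $b>0$ and, within each, into the subcases on $a$ dictated by the position of $-e/a$ relative to $-e^{-1}$, $0$ and $e$. In each subcase the number of interior critical points is $0$, $1$ or $2$, and combining this with $g(0)=0$, the location of $x_{0}$ and the sign of $g$ yields the qualitative graph of $f=|g|$: either strictly increasing from $0$ to $\infty$ (a unique solution, cases (a), (b), (g), (k)); or a single hump over $[0,x_{0}]$ followed by a strictly increasing branch to $\infty$, whose level sets produce the pattern $\{2,3,2,1\}$ as $y$ passes the hump height (cases (c)--(f) and (l)--(o)); or a local-max/local-min profile giving the pattern $\{1,2,3,2,1\}$ (cases (h)--(j)). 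Reading off the level-set cardinalities in each profile yields the stated counts, and evaluating the hump/extremum heights via the simplified critical value supplies the explicit $y$-thresholds.

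I expect the main obstacle to be the bookkeeping in the critical-point step: one must certify that in each regime there are no critical points beyond those found, so that the claimed monotonicity on each subinterval is genuine, and one must decide, using only the sign of $b$ and the comparison of $W_{0}(-e/a)$ (or $W_{-1}(-e/a)$) with $1$, which critical points fall inside $(0,\infty)$ and whether their values are positive or negative. The degenerate boundary $a=e^{2}$, where $W_{0}(-e/a)=W_{-1}(-e/a)=-1$ and the two critical points merge into a non-strict saddle, needs separate attention to confirm that uniqueness persists there rather than a spurious extra solution appearing.
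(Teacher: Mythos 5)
Your proposal is correct and takes essentially the same route as the paper: the same auxiliary function $f(x)=|1+ae^{-x/b}|x$ with its zeros at $0$ and $b\ln(|a|)$, the same substitution $t=1-\frac{x}{b}$ reducing $f'(x)=0$ to $te^{t}=-\frac{e}{a}$ and hence to the branches $W_{0}$, $W_{-1}$ of Lambert's W function, and the same intermediate-value-theorem counting of level sets against the critical values, split over the sign of $b$ and the position of $a$. The only notable refinement is your closed form $g(b(1-t))=-b(1-t)^{2}/t$ for the critical value (the paper leaves the thresholds as $(1+ae^{W(-e/a)-1})b(1-W(-e/a))$) and your use of the comparison of $t$ with $1$ instead of Rolle's theorem to place the critical points in the domain; these make signs more transparent but do not change the argument.
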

\begin{proof}
Let us define the continuous function
\[
 f\colon[0,\infty)\to[0,\infty),\;f(x):=|1+ae^{-x/b}|x,
\]
given by the left-hand side of the equation under consideration. First, we determine the conditions on 
$a,b\in\R$ and $x\geq 0$ such that $1+ae^{-x/b}\geq 0$. If $a\geq 0$, then $1+ae^{-x/b}>0$ and thus
$f(x)=(1+ae^{-x/b})x$ for all $b\in\R$ and $x\geq 0$. 
Now, let us turn to the case $a<0$. We have the equivalences 
\[
1+ae^{-x/b}\geq 0\;\Leftrightarrow\;e^{-x/b}\leq |a|^{-1}\;\Leftrightarrow\; x/b \geq  \ln(|a|)
\;\Leftrightarrow\; \begin{cases} x\geq  b\ln(|a|) &,\; b>0,\\x\leq b\ln(|a|) &,\; b<0.\end{cases}
\]
Furthermore, we note that 
\[
 b\ln(|a|)\geq 0\;\Leftrightarrow\; (b>0,\;|a|\geq 1)\;\text{or}\;(b<0,\;|a|\leq 1).
\]
Hence, if $b>0$ and $-1\leq a<0$, then $b\ln(|a|)\leq 0$ and
\[
f(x)=(1+ae^{-x/b})x,\;x\in[0,\infty),  
\]
if $b>0$ and $a<-1$, then $b\ln(|a|)>0$ and 
\[
f(x)=\begin{cases}-(1+ae^{-x/b})x &,\;x\in[0,b\ln(|a|)],\\(1+ae^{-x/b})x &,\;x\in(b\ln(|a|),\infty),\end{cases}
\]
if $b<0$ and $-1<a<0$, then $b\ln(|a|)>0$ and
\[
f(x)=\begin{cases}(1+ae^{-x/b})x &,\;x\in[0,b\ln(|a|)],\\-(1+ae^{-x/b})x &,\;x\in(b\ln(|a|),\infty),\end{cases}
\]
if $b<0$ and $a\leq -1$, then $b\ln(|a|)\leq 0$ and
\[
f(x)=-(1+ae^{-x/b})x,\; x\in[0,\infty). 
\]
The set of zeros $\mathcal{N}_{f}$ of $f$ in $[0,\infty)$ is $\mathcal{N}_{f}=\{0\}$, 
if either $a\geq 0$ or $b>0$ and $-1\leq a<0$ or $b<0$ and $a\leq -1$, and $\mathcal{N}_{f}=\{0,b\ln(|a|)\}$ 
if either $b>0$ and $a<-1$ or $b<0$ and $-1<a<0$. 
We note that $f$ is infinitely continuously differentiable on $[0,\infty)\setminus\mathcal{N}_{f}$.

$(a)$ In this case our claim follows from the intermediate value theorem since $f$ is strictly 
increasing on $[0,\infty)$, $f(0)=0$ and $\lim_{x\to\infty}f(x)=\infty$.

$(b)$ The first derivative fulfils for all $x\in(0,\infty)$
\[
 f'(x)=\underbrace{-a}_{\geq 1}\underbrace{(1-\frac{x}{b})}_{>1}\underbrace{e^{-x/b}}_{>1}-1>1-1=0,
\]
implying that $f$ is strictly increasing on $[0,\infty)$. Like in (a) this proves our claim by the intermediate value theorem 
because $f(0)=0$ and $\lim_{x\to\infty}f(x)=\infty$ as well. 

$(c)$-$(f)$ We start with the first derivative of $f$. We have 
\[
 f'(x)=\begin{cases}a(1-\frac{x}{b})e^{-x/b}+1&,\;x\in(0,b\ln(|a|)),
 \\-a(1-\frac{x}{b})e^{-x/b}-1&,\;x\in(b\ln(|a|),\infty).\end{cases}
\]
The function $f$ is strictly increasing on $(b\ln(|a|),\infty)$ since for all $x>b\ln(|a|)$ it holds that
\begin{align*}
f'(x)&=-a(1-\frac{x}{b})e^{-x/b}-1>|a|(1-\frac{b\ln(|a|)}{b})e^{-b\ln(|a|)/b}-1
=|a|(1-\ln(|a|))|a|^{-1}-1\\&=-\ln(|a|)>0.
\end{align*}
Next, we compute the local extremum of $f$ on $(0,b\ln(|a|))$. 
For the second derivative on $(0,b\ln(|a|))$ we remark that
\[
 f''(x)=\underbrace{-\frac{a}{b}}_{<0}\underbrace{(2-\frac{x}{b})e^{-x/b}}_{>0}<0,\;x\in(0,b\ln(|a|)),
\]
so our local extremum will be a maximum.
Setting $z:=1-\frac{x}{b}>0$ for $x\in(0,b\ln(|a|))$, we have the equivalences
\[
 0=f'(x)\quad\Leftrightarrow\quad 0=aze^{-1}e^{z}+1\quad\Leftrightarrow\quad -\frac{e}{a}=ze^{z}.
\]
Since $z>0$ and $-\frac{e}{a}>0$, the unique solution of the last equation is given by $z=W_{0}(-e/a)$. 
This yields by resubstitution the local maximum $x_{0}:=b(1-W_{0}(-e/a))$, which is in $(0,b\ln(|a|))$ by Rolle's theorem.
Furthermore, $f$ has two zeros in $[0,\infty)$, 
namely, $x_{1}:=0$ and $x_{2}:=b\ln(|a|)$, and $\lim_{x\to\infty}f(x)=\infty$. 
This implies our claims (c)-(f) using the intermediate value theorem again.

$(g)$-$(j)$ Again, we compute the extrema of $f$. We have 
\[
 f'(x)=a(1-\frac{x}{b})e^{-x/b}+1\quad\text{and}\quad
 f''(x)=-\frac{a}{b}(2-\frac{x}{b})e^{-x/b},\;x\in(0,\infty).
\]
Defining $z:=1-\frac{x}{b}<0$ for $x>b$, we note that
\[
 0=f'(x)\quad\Leftrightarrow\quad -\frac{e}{a}=ze^z .
\]
Since $z<0$ for $x>b$ and $-\frac{e}{a}<0$, there are two solutions $z_0:=W_{0}(-e/a)$ 
and $z_{1}:=W_{-1}(-e/a)$ iff $-\frac{e}{a}\geq -e^{-1}$, which is equivalent to $a\geq e^2$. 
We remark that $z_0=z_1$ if $a=e^2$. By resubstitution we obtain that $0=f'(x)$ 
has the solutions $x_{0}:=b(1-W_{0}(-e/a))$ and $x_{1}:=b(1-W_{-1}(-e/a))$ iff $a\geq e^2$. 
From $-1< W_{0}(-e/a)<0$ and $-\infty<W_{-1}(-e/a)<-1$ for $a>e^2$ follows 
$b<x_{0}<2b$ and $x_{1}>2b$, which implies 
\[
f''(x_{0})=\underbrace{-\frac{a}{b}}_{<0}\underbrace{(2-\frac{x_{0}}{b})e^{-x_{0}/b}}_{>0}<0
\]
and 
\[
f''(x_{1})=\underbrace{-\frac{a}{b}}_{<0}\underbrace{(2-\frac{x_{1}}{b})e^{-x_{1}/b}}_{<0}>0.
\]
Hence $x_{0}$ is a local maximum and $x_{1}$ a local minimum if $a>e^2$. If $a=e^2$, then 
$x_{0}=x_{1}=2b$ and $f''(2b)=0$ and $f'''(2b)=a/(e\cdot b)^{2}\neq 0$, 
yielding that $x_{0}$ is a saddle point.
In addition, we observe that $f(0)=0$ and 
\[
\lim_{x\to\infty}f(x)=\lim_{x\to\infty}x+\lim_{x\to\infty}ae^{-x/b}x=\infty+0=\infty.
\]
This proves our claims (g)-(j) by the intermediate value theorem, 
in particular, we deduce that $f$ is strictly increasing on $[0,\infty)$ if $0\leq a\leq e^{2}$.

$(k)$ The first derivative for all $x\in(0,\infty)$ is given by
\[
 f'(x)=a(1-\frac{x}{b})e^{-x/b}+1.
\]
As $f(0)=0$ and $\lim_{x\to\infty}f(x)=\infty$, we only need to 
show that $f$ is strictly increasing on $[0,\infty)$ due to the intermediate value theorem. 
Since 
\[
0<e^{-x/b}<1\quad\text{and}\quad 
1-\frac{x}{b}<1
\]
for all $x>0$, we derive that 
\[
f'(x)=a(1-\frac{x}{b})e^{-x/b}+1>a+1\geq 0
\] 
for all $x>0$, confirming our claim.

$(l)$-$(o)$ Since $f(0)=f(b\ln(|a|))=0$ and $\lim_{x\to\infty}f(x)=\infty$, we only need to 
show that $f$ is strictly increasing on $(b\ln(|a|),\infty)$ and has a unique local maximum 
on $(0,b\ln(|a|))$. Then the intermediate value theorem proves our claim. The first derivative is 
\[
 f'(x)=\begin{cases}-a(1-\frac{x}{b})e^{-x/b}-1&,\;x\in(0,b\ln(|a|)),\\a(1-\frac{x}{b})e^{-x/b}+1&,\;x\in(b\ln(|a|),\infty).\end{cases}
\] 
We start with the proof that $f$ is strictly increasing on $(b\ln(|a|),\infty)$. 
We have for $x\in(b\ln(|a|),\infty)$ that
\[
0<e^{-x/b}< e^{-b\ln(|a|)/b}=|a|^{-1}\quad\text{and}\quad 
1-\frac{x}{b}< 1-\frac{b\ln(|a|)}{b}=1-\ln(|a|),
\]
implying
\begin{align*}
f'(x)&=a(1-\frac{x}{b})e^{-x/b}+1\geq a(1-\ln(|a|))|a|^{-1}+1=-1+\ln(|a|)+1\\
&=\ln(|a|)>0.
\end{align*}
Hence $f$ is strictly increasing on $(b\ln(|a|),\infty)$. Let us turn to the local maximum on $(0,b\ln(|a|))$. 
Setting $z:=1-\frac{x}{b}>0$ for $0<x<b$, we have the equivalences
\[
 0=f'(x)\;\Leftrightarrow\; 0=-aze^{-1}e^{z}-1\;\Leftrightarrow\; -\frac{e}{a}=ze^{z}.
\]
Since $z>0$ and $-\frac{e}{a}>0$, the unique solution of the last equation is given by $z=W_{0}(-e/a)$ 
This yields by resubstitution $x_{0}:=b(1-W_{0}(-e/a))$ and $x_{0}\in(0,b\ln(|a|))$ by Rolle's theorem. 
In addition, we observe that $x_{0}=b(1-W_{0}(-e/a))<b$, giving
\[
 f''(x_{0})=\underbrace{\frac{a}{b}}_{<0}\underbrace{(2-\frac{x_{0}}{b})e^{-x_{0}/b}}_{>0}<0. 
\]
Therefore $x_{0}$ is a local maximum on $(0,b\ln(|a|))$.
\begin{center}
\includegraphics[scale=0.25]{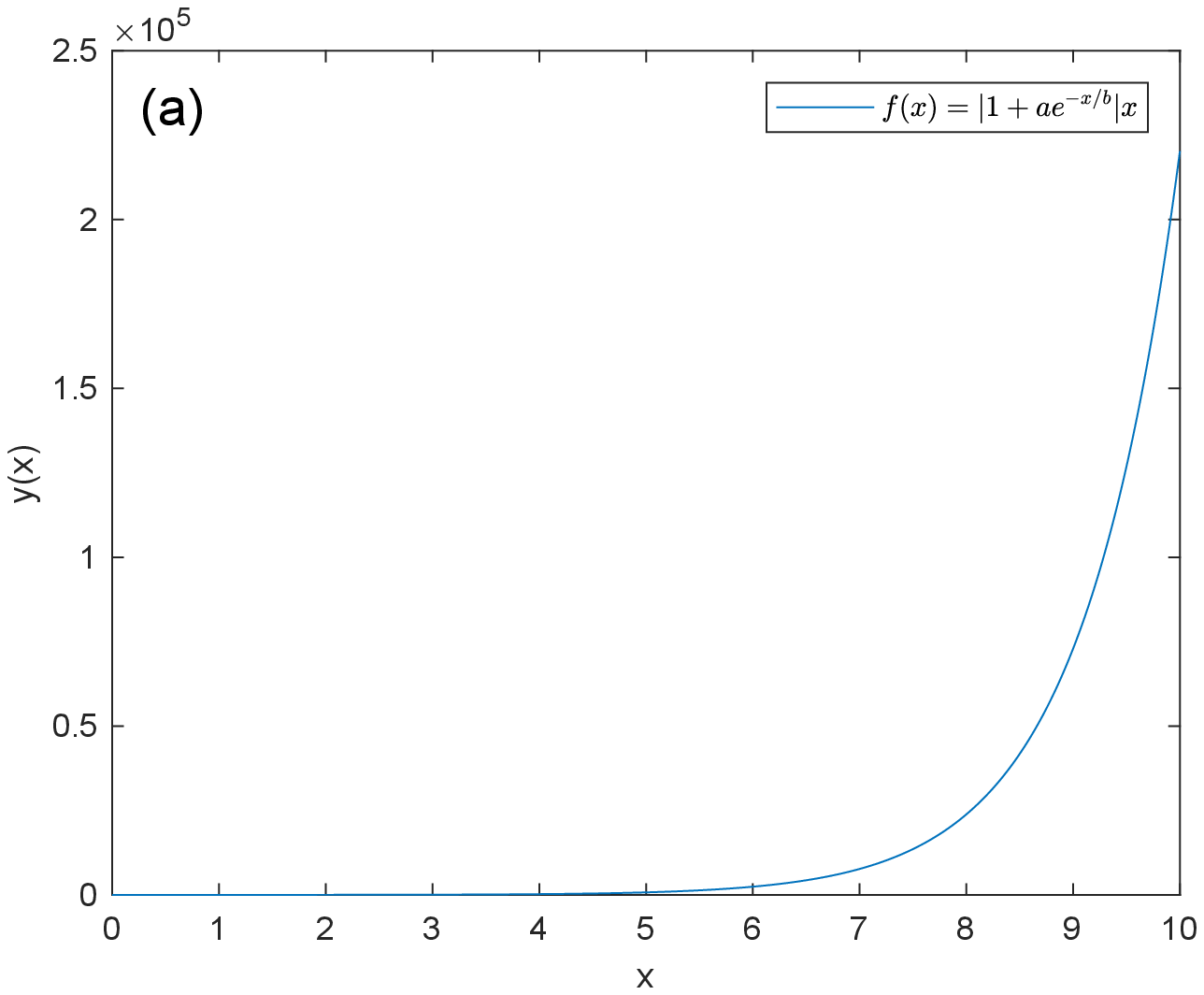}
\includegraphics[scale=0.25]{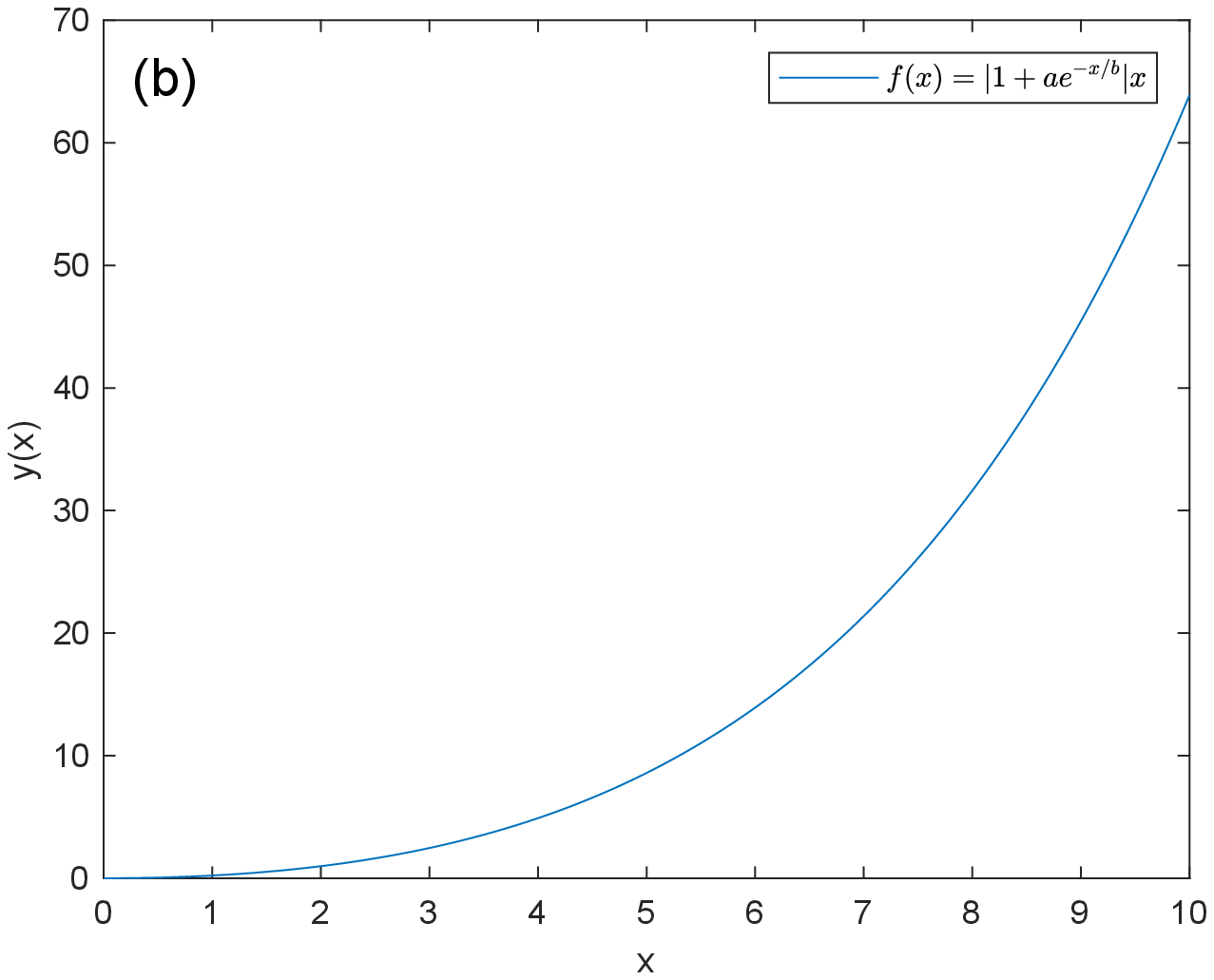}
\includegraphics[scale=0.25]{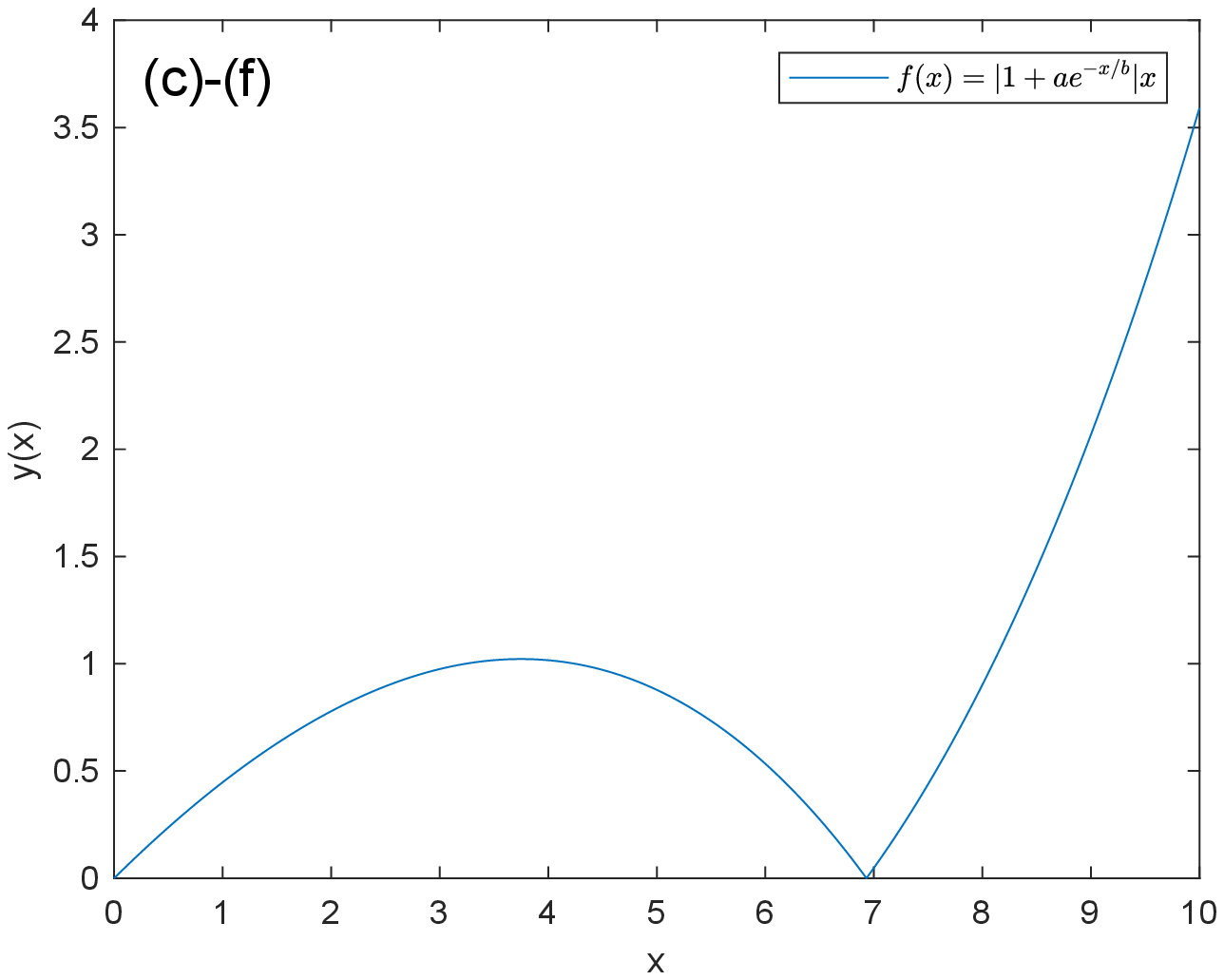}
\includegraphics[scale=0.25]{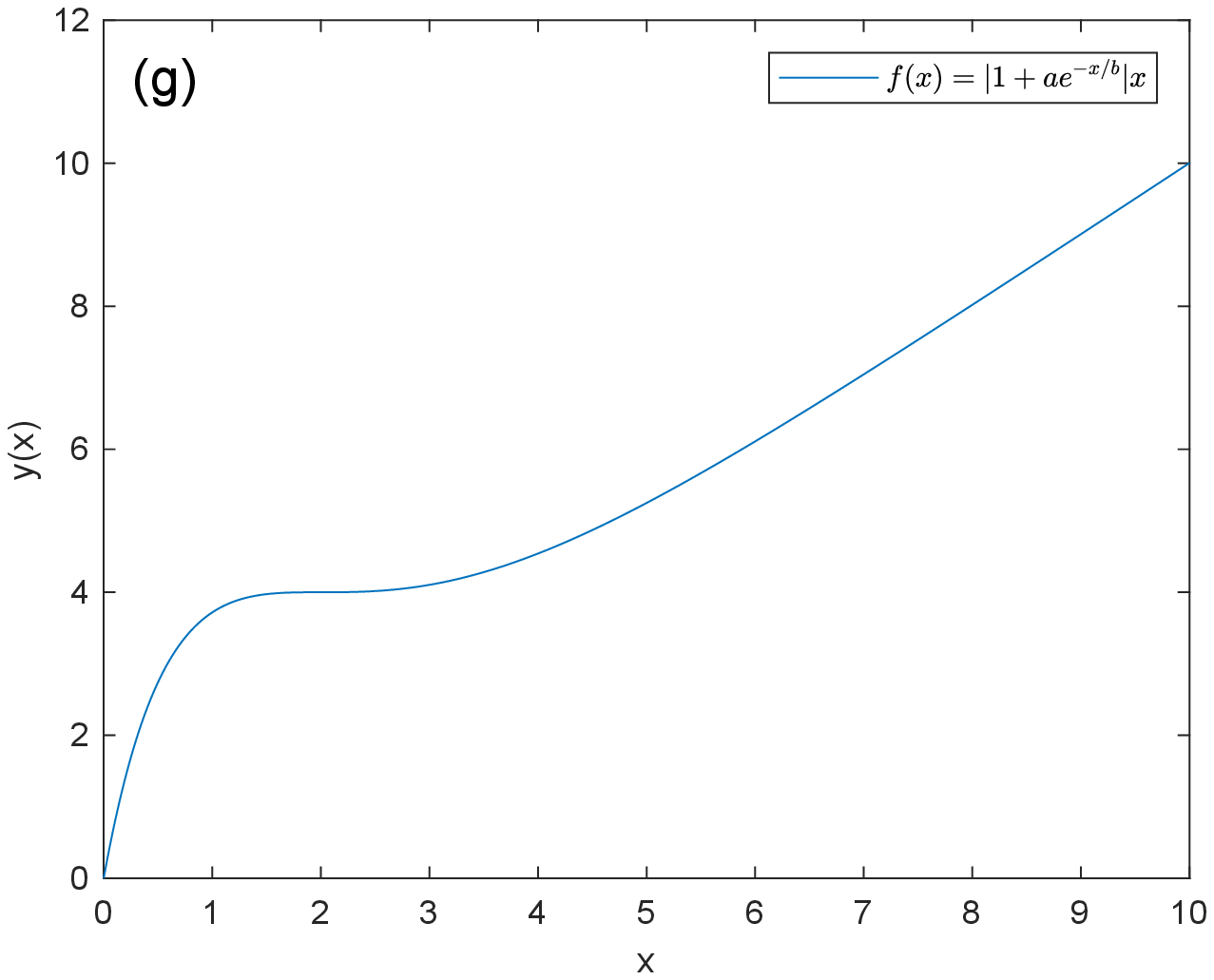}
\includegraphics[scale=0.25]{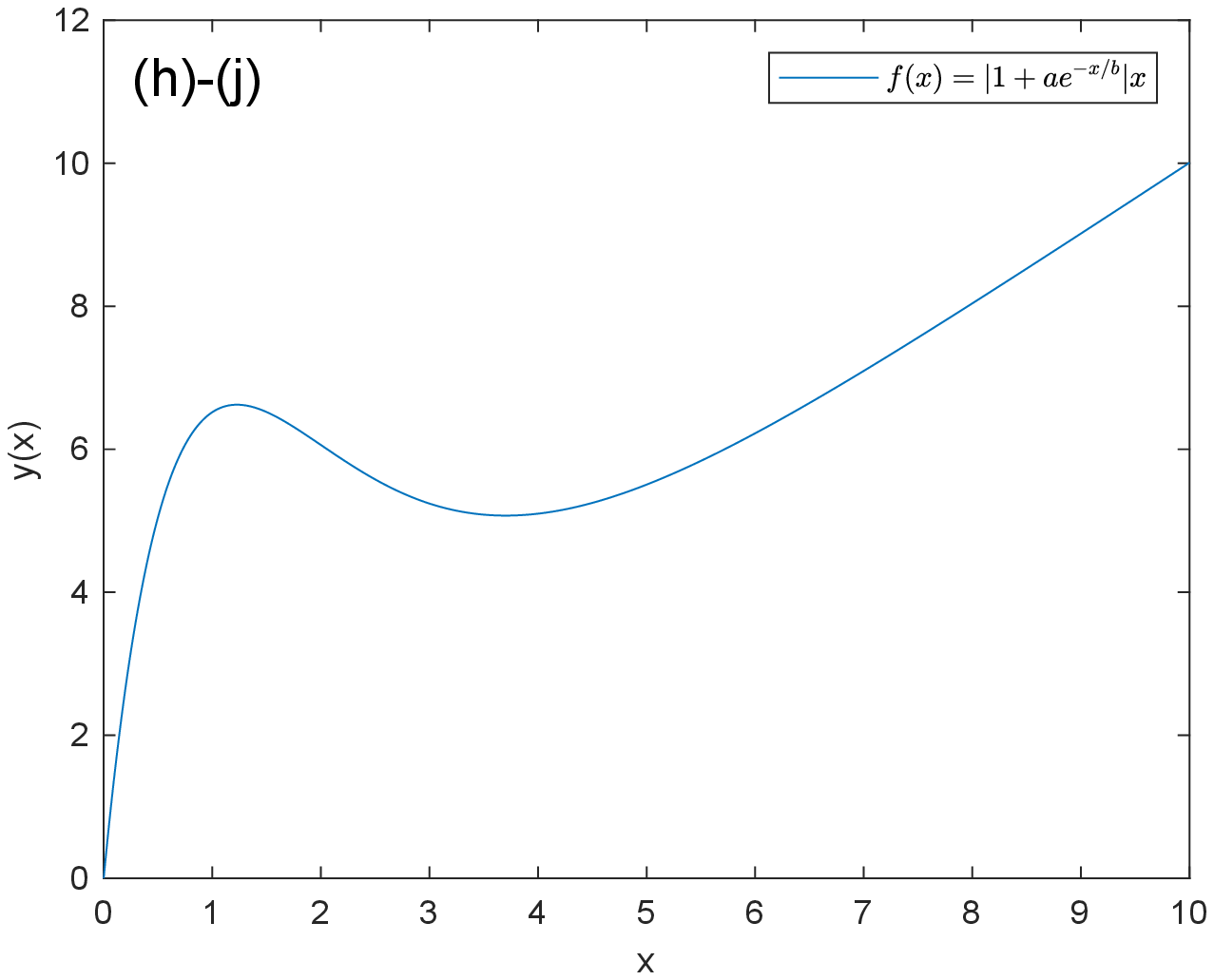}
\includegraphics[scale=0.25]{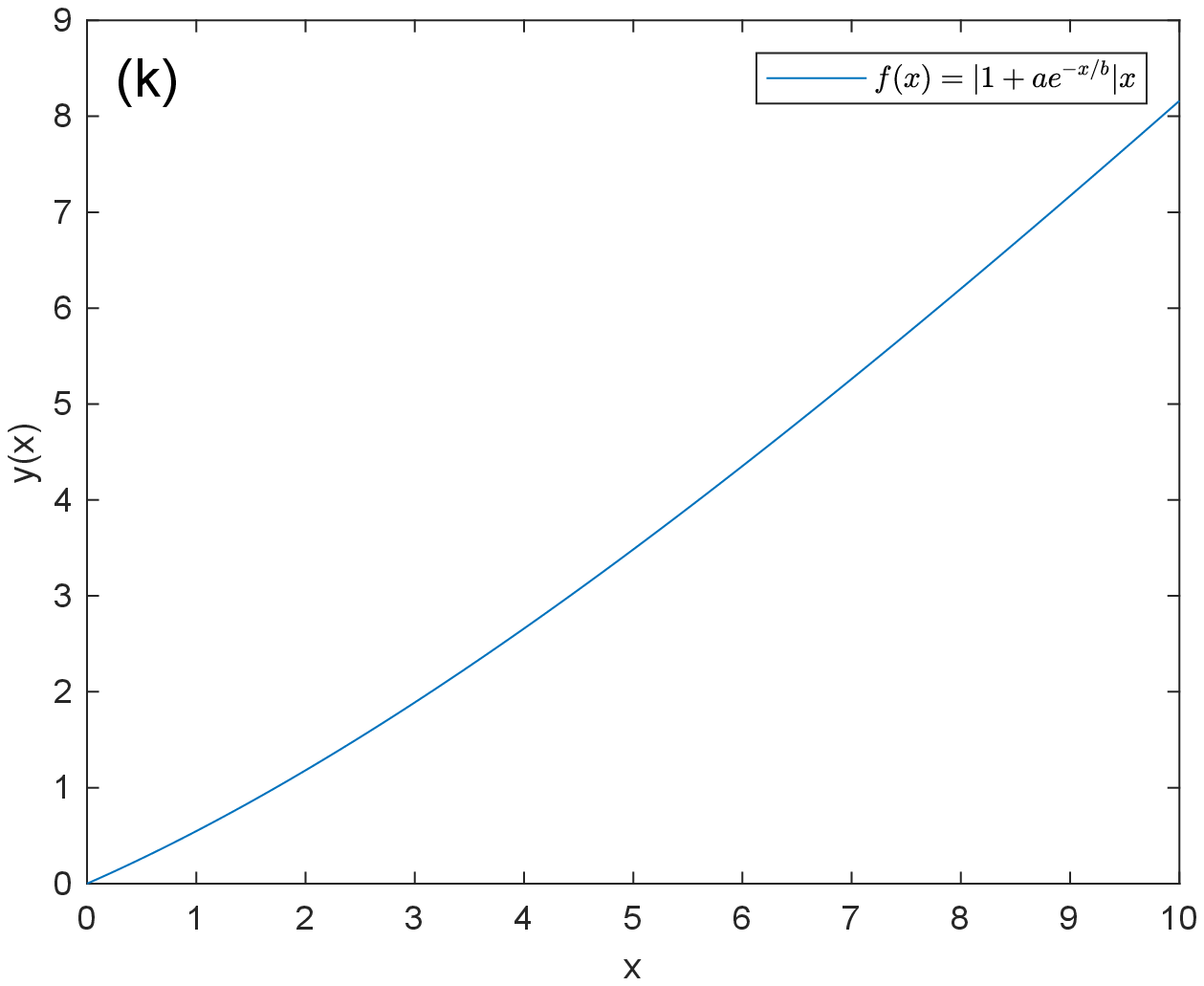}
\includegraphics[scale=0.25]{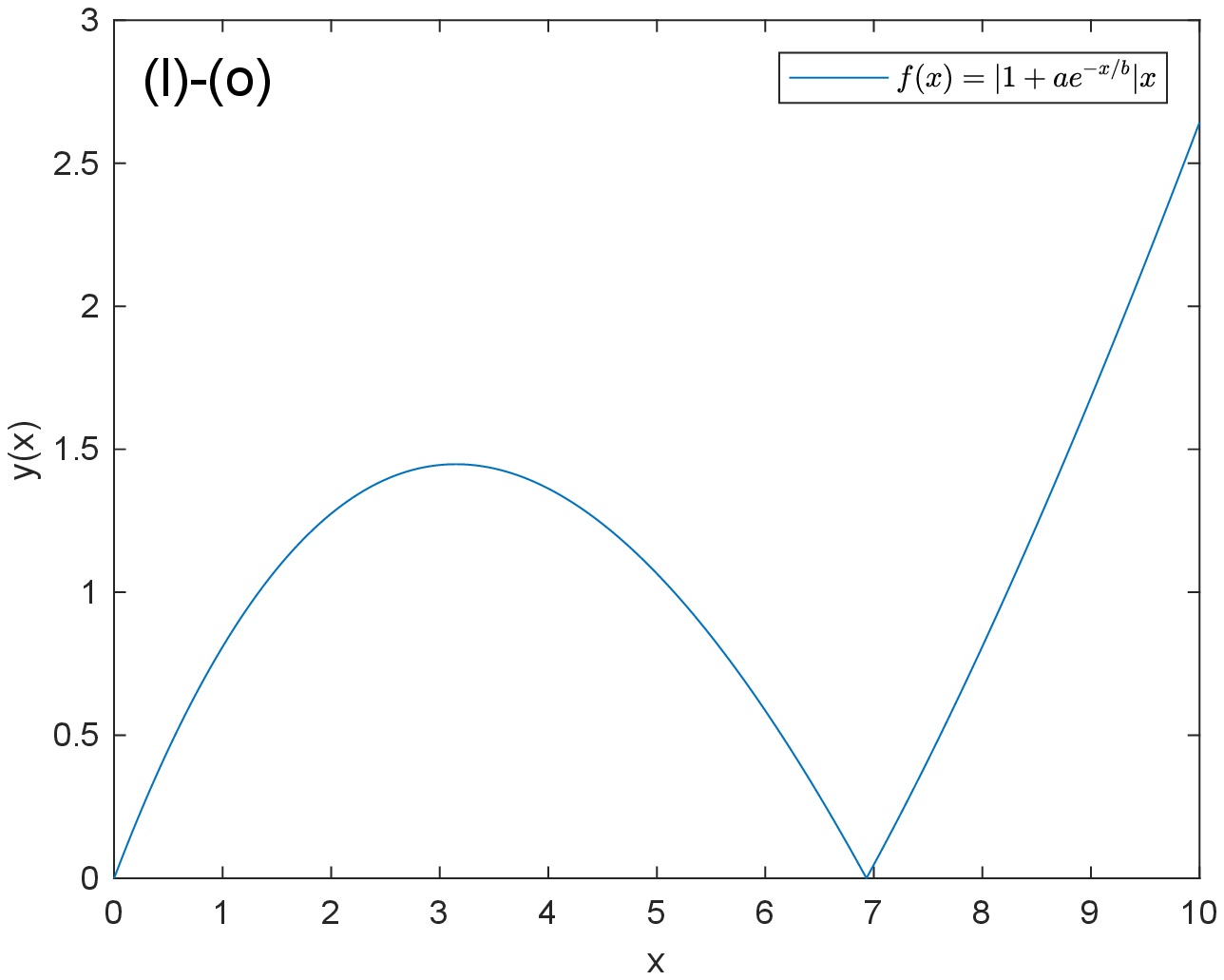}
\captionof{figure}{Graph of $f$ with (a) $a=1$ and $b=-1$, (b) $a=-1$ and $b=-5$, (c)-(f) $a=-1/2$ and $b=-10$, 
(g) $a=e^2$ and $b=1$, (h)-(j) $a=15$ and $b=1$, (k) $a=-1/2$ and $b=10$, (l)-(o) $a=-2$ and $b=10$.}
\end{center}
\end{proof}

Due to the preceding propositions we obtain a solution of our problem. 

\begin{thm}\label{thm:matrix_case}
Let $(E,\|\cdot\|)$ be a normed space, $E\neq\{0\}$, $a,b\in\R$, $b\neq 0$, and $Y\in E$. Then the
equation 
\[
(1+ae^{-\frac{\|X\|}{b}})X=Y
\]
has in $E$
\begin{enumerate} 
 \item[(a)] a unique solution if $a\geq 0$ and $b<0$,
 \item[(b)] a unique solution if $a\leq -1$ and $b<0$,
 \item[(c)] the set of solutions $\{0\}\cup\{X\in E\;|\;\|X\|=b\ln(|a|)\}$ if $-1<a<0$, $b<0$ and $Y=0$,
 \item[(d)] three solutions if $-1<a<0$, $b<0$ and $0<\|Y\|<(1+ae^{W_{0}(-e/a)-1})b(1-W_{0}(-e/a))$,
 \item[(e)] two solutions if $-1<a<0$, $b<0$ and $\|Y\|=(1+ae^{W_{0}(-e/a)-1})b(1-W_{0}(-e/a))$,
 \item[(f)] a unique solution if $-1<a<0$, $b<0$ and $\|Y\|>(1+ae^{W_{0}(-e/a)-1})b(1-W_{0}(-e/a))$,
 \item[(g)] a unique solution if $0\leq a\leq e^{2}$ and $b>0$,
 \item[(h)] a unique solution if $a>e^{2}$, $b>0$ and $\|Y\|<(1+ae^{W_{-1}(-e/a)-1})b(1-W_{-1}(-e/a))$ 
 or $\|Y\|>(1+ae^{W_{0}(-e/a)-1})b(1-W_{0}(-e/a))$,
 \item[(i)] two solutions if $a>e^{2}$, $b>0$ and $\|Y\|=(1+ae^{W_{-1}(-e/a)-1})b(1-W_{-1}(-e/a))$ 
 or $\|Y\|=(1+ae^{W_{0}(-e/a)-1})b(1-W_{0}(-e/a))$,
 \item[(j)] three solutions if $a>e^{2}$, $b>0$ and 
 $(1+ae^{W_{-1}(-e/a)-1})b(1-W_{-1}(-e/a))<\|Y\|<(1+ae^{W_{0}(-e/a)-1})b(1-W_{0}(-e/a))$,
 \item[(k)] a unique solution if $-1\leq a<0$ and $b>0$,
 \item[(l)] the set of solutions $\{0\}\cup\{X\in E\;|\;\|X\|=b\ln(|a|)\}$ if $a<-1$, $b>0$ and $Y=0$,
 \item[(m)] three solutions if $a<-1$, $b>0$ and $0<\|Y\|<-(1+ae^{W_{0}(-e/a)-1})b(1-W_{0}(-e/a))$,
 \item[(n)] two solutions if $a<-1$, $b>0$ and $\|Y\|=-(1+ae^{W_{0}(-e/a)-1})b(1-W_{0}(-e/a))$,
 \item[(o)] a unique solution if $a<-1$, $b>0$ and $\|Y\|>-(1+ae^{W_{0}(-e/a)-1})b(1-W_{0}(-e/a))$.
\end{enumerate}
\end{thm}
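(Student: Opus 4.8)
The plan is to transport the complete case analysis of the scalar equation \eqref{eq:scalar_eq} obtained in \prettyref{prop:scalar_case} to the vector equation \eqref{eq:matrix_eq}, using the correspondence recorded in \prettyref{prop:relation_scalar_case}. Writing $y:=\|Y\|$, I would split the argument according to whether $Y=0$ or $Y\neq 0$, since the only place where the passage from scalars to vectors is non-trivial is the branch $1+ae^{-x/b}=0$, and this branch matters only when $Y=0$.

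First I would suppose $Y\neq 0$, so that $y>0$, and promote the two one-directional statements of \prettyref{prop:relation_scalar_case} to a genuine bijection. For $y>0$ any solution $x$ of $|1+ae^{-x/b}|x=y$ must satisfy $x>0$ and $|1+ae^{-x/b}|>0$, hence $1+ae^{-x/b}\neq 0$; thus part (a) applies and produces the vector $X:=(1+ae^{-x/b})^{-1}Y$ with $\|X\|=x$. Conversely, for any solution $X$ of \eqref{eq:matrix_eq} the scalar $\|X\|$ solves \eqref{eq:scalar_eq} by part (b), and since $Y\neq 0$ forces $1+ae^{-\|X\|/b}\neq 0$ one may invert to recover $X=(1+ae^{-\|X\|/b})^{-1}Y$. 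These two assignments are mutually inverse and norm-preserving, so $X\mapsto\|X\|$ is a bijection from the solution set of \eqref{eq:matrix_eq} onto the solution set of \eqref{eq:scalar_eq}. Consequently the solution counts agree in the two equations, and reading them off from \prettyref{prop:scalar_case} settles every case with $y>0$; in particular this disposes of (d)--(f), (h)--(j), (m)--(o) and of the parts of (a), (b), (g), (k) with $\|Y\|>0$.

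It remains to treat $Y=0$, and here \eqref{eq:matrix_eq} factorises as $(1+ae^{-\|X\|/b})X=0$, so a solution is either $X=0$ (always admissible) or satisfies $1+ae^{-\|X\|/b}=0$. The latter is equivalent to $e^{-\|X\|/b}=-1/a$, which is solvable only for $a<0$ and then forces $\|X\|=b\ln(|a|)$; it contributes genuinely new solutions exactly when $b\ln(|a|)>0$, i.e.\ in the two regimes $-1<a<0,\ b<0$ and $a<-1,\ b>0$. In these regimes every $X$ on the sphere $\{X\in E:\|X\|=b\ln(|a|)\}$ solves \eqref{eq:matrix_eq}, and because $E\neq\{0\}$ this sphere is non-empty (scale any non-zero vector to the required norm), yielding the set $\{0\}\cup\{X\in E:\|X\|=b\ln(|a|)\}$ asserted in (c) and (l). In every other regime one has either $a\geq 0$ (so $1+ae^{-\|X\|/b}=0$ is unsolvable) or $b\ln(|a|)\leq 0$ (so the sphere degenerates to $\{0\}$), leaving $X=0$ as the unique solution and matching the $\|Y\|=0$ instances of (a), (b), (g), (k).

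The single delicate point, which I expect to be the main obstacle, is exactly this collapse of the bijection at $Y=0$: the scalar root $x=b\ln(|a|)$ of \eqref{eq:scalar_eq} is the one lying on the forbidden branch $1+ae^{-x/b}=0$ that is excluded in part (a) of \prettyref{prop:relation_scalar_case}. For $Y\neq 0$ this root cannot occur, but for $Y=0$ it explodes from a single scalar solution into an entire norm-sphere of vector solutions, which is precisely why (c) and (l) are phrased as solution sets rather than as a count. Apart from bookkeeping the thresholds, no further work is needed, since all quantitative information has already been produced in \prettyref{prop:scalar_case}.
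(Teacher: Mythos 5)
Your proposal is correct and takes essentially the same route as the paper: the scalar classification of \prettyref{prop:scalar_case} is transported to the vector equation via \prettyref{prop:relation_scalar_case}, and the case $Y=0$ is settled by factorising $(1+ae^{-\|X\|/b})X=0$ into $X=0$ or $\|X\|=b\ln(|a|)$, which yields the solution spheres in (c) and (l). If anything, your write-up is more meticulous than the paper's short proof: you make the map $X\mapsto\|X\|$ an explicit, norm-preserving bijection between the two solution sets when $Y\neq 0$ (observing that $y>0$ excludes the forbidden branch $1+ae^{-x/b}=0$), and you route the $\|Y\|=0$ instances of (a), (b), (g), (k) through the factorisation as well, which in particular covers the edge case $a=-1$, $Y=0$, where \prettyref{prop:relation_scalar_case}(a) cannot be invoked since $1+ae^{0}=0$.
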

\begin{proof}
All cases except for (c) and (l) are a direct consequence of our \prettyref{prop:scalar_case} 
and \prettyref{prop:relation_scalar_case}. 
Now, we turn to the cases (c) and (l). First, we observe that $b\ln(|a|)>0$. Now, we only need to 
remark that $(1+ae^{-\frac{\|X\|}{b}})X=0$ for some $X\in E$ if and only if $X=0$ or $\|X\|=b\ln(|a|)$. 
\end{proof}

Part (a) implies that our motivating equation \eqref{eq:stress_discr_equiv} from the introduction is uniquely solvable.

\begin{rem}\label{rem:numerics}
\begin{enumerate}
\item[a)] \prettyref{prop:scalar_case} and \prettyref{thm:matrix_case} remain valid 
if $\|\cdot\|$ is replaced by an absolutely $\R$-homogeneous function $p\colon E\to [0,\infty)$, i.e.\ 
a function $p$ such that $p(\lambda X)=|\lambda|p(X)$ for all $\lambda\in\R$.
\item[b)] There are at least three solutions in (c) and (l) if $E\neq\{0\}$. 
Namely, choosing $\widetilde{X}\in E$ with $\|\widetilde{X}\|=1$, we always have the solutions $X=0$, $X=b\ln(|a|)\widetilde{X}$ 
or $X=-b\ln(|a|)\widetilde{X}$. If $\K=\C$, then there are infinitely many $\widetilde{X}\in E$, $\|\widetilde{X}\|=1$, and  
so there are infinitely many solutions in (c) and (l).
\item[c)] Let $E:=\R^{m\times n}$, $m,n\in\N$, and denote by 
$\|\cdot\|_{p}$ with $p\in[1,\infty]$ the $p$-norm and by $\|\cdot\|_{F}$ the Frobenius norm on $E$. 
If $\|\cdot\|\in\{\|\cdot\|_{1},\|\cdot\|_{2},\|\cdot\|_{\infty},\|\cdot\|_{F}\}$ and $m,n\in\N$, $m,n\geq 2$,
then there are infinitely many solutions in case (c) and (l).
It suffices to show that there are infinitely many matrices $X\in\R^{m\times n}$ 
with $\|X\|=b\ln(|a|)$. We choose $c\in\R$ with $0\leq c\leq b\ln(|a|)$.
If $\|\cdot\|=\|\cdot\|_{F}$, we define $X:=(x_{ij})\in\R^{m\times n}$ 
by $x_{11}:=\sqrt{b^{2}\ln^{2}(|a|)-c^2}$, $x_{12}:=c$ and $x_{ij}:=0$ else. 
Then we have $\|X\|_{F}=\sqrt{x_{11}^2+x_{12}^{2}}=b\ln(|a|)$, 
which implies 
\[
(1+ae^{-\frac{\|X\|_{F}}{b}})X=0\cdot X=0.
\]
Since the maximal singular value of $X$ is $\sqrt{x_{11}^2+x_{12}^{2}}=b\ln(|a|)$, we have $\|X\|_{2}=b\ln(|a|)=\|X\|_{F}$. 
Thus we have infinitely many solutions if $\|\cdot\|$ is the $2$-norm or the Frobenius norm. 
If $\|\cdot\|=\|\cdot\|_{1}$, we define $X:=(x_{ij})\in\R^{m\times n}$ 
by $x_{11}:=b\ln(|a|)-c$, $x_{21}:=c$ and $x_{ij}:=0$ else. 
If $\|\cdot\|=\|\cdot\|_{\infty}$, we define $X:=(x_{ij})\in\R^{m\times n}$ 
by $x_{11}:=b\ln(|a|)-c$, $x_{12}:=c$ and $x_{ij}:=0$ else. Then $\|X\|=b\ln(|a|)$ if $\|\cdot\|$ is the $1$-norm or $\infty$-norm, 
which again implies that there are infinitely many solutions.

If $n=m=1$, then there are only the  three solutions
$X=0$, $X=b\ln(|a|)$ or $X=-b\ln(|a|)$ from b).
\item[d)] Let $E\neq\{0\}$, $Y\in E$ and $y:=\|Y\|$. If $(x_{k})_{k\in\N}$ is a sequence in $[0,\infty)$ which converges 
to a solution $x\in[0,\infty)$ of $|1+ae^{-x/b}|x=y$ with $1+ae^{-x/b}\neq 0$, 
then the sequence given by $X_{k}:=(1+ae^{-x_{k}/b})^{-1}Y\in E$ is well-defined if $k$ is big enough and
converges to the solution $X:=(1+ae^{-x/b})^{-1}Y\in E$ of $(1+ae^{-\|X\|/b})X=Y$ w.r.t.\ $\|\cdot\|$ because 
\begin{align*}
\|X_{k}-X\|&=\|(1+ae^{-x_{k}/b})^{-1}Y-(1+ae^{-x/b})^{-1}Y\|\\
&=|(1+ae^{-x_{k}/b})^{-1}-(1+ae^{-x/b})^{-1}|\|Y\|
\end{align*}
and $\lim_{k\to\infty}(1+ae^{-x_{k}/b})^{-1}=(1+ae^{-x/b})^{-1}$. 
Thus, if we use a numerical method which produces a sequence $(x_{k})_{k\in\N}$ with $\lim_{k\to\infty}x_{k}=x$, 
we obtain by $(X_{k})_{k\in\N}$ a sequence of matrices which converges to the solution $X$. 
\item[e)] In the MATLAB \cite{MATLAB2017} \texttt{m-file} named \texttt{msolve} (see \prettyref{app:m_file}) 
our propositions and theorem are used to solve the norm equation 
\eqref{eq:scalar_eq} with $y:=\|Y\|_{F}$ for given inputs $a,b\in\R$, $b\neq 0$, and $Y\in\R^{m\times n}=:E$ w.r.t.\ 
to the Frobenius norm. 
The MATLAB function \texttt{fzero} is applied to compute the roots of $g_{y}(x):=|1+ae^{-x/b}|x-y$ in $[0,\infty)$. 
Then these roots are used to obtain the solutions of the matrix equation \eqref{eq:matrix_eq}
via formula \eqref{eq:solution_matrix_eq}.
\item[f)] Instead of \texttt{fzero} one might use the Newton-Raphson method to compute the roots of $g_{y}$, at least, 
in some of our cases. One of the difficulties of the Newton-Raphson method is the choice of suitable initial values 
$x_{0}\in[0,\infty)$ for the Newton iteration. For example in case (a) this can be solved since $g_{y}'(x)=f'(x)>0$ and 
$g_{y}''(x)=f''(x)>0$ for all $x\in[0,\infty)$, which implies that the Newton-Raphson method converges by 
\cite[Satz 30, p.\ 229]{Spaeth1994} for every initial value $x_{0}\in[0,\infty)$ with $x_{0}>x^{\ast}$ 
where $x^{\ast}$ is the root of $g_{y}$ in $[0,\infty)$. Using that 
\[
g_{y}(y)=(1+ae^{-y/b})y-y=ae^{-y/b}y>0
\]
and that $g_{y}$ is strictly increasing in case (a), we can choose $x_{0}:=y$ as a suitable initial value. 
\end{enumerate}
\end{rem}
\appendix
\section{\texttt{m-file} to compute the solutions of $(1+ae^{-\frac{\|X\|}{b}})X=Y$}\label{app:m_file}
As mentioned in \prettyref{rem:numerics} e) the solutions of $(1+ae^{-\frac{\|X\|}{b}})X=Y$ in 
$E=\R^{m\times n}$, $m,n\in\N$, $m,n\geq 2$, for the Frobenius norm 
$\|\cdot\|=\|\cdot\|_{F}$ are computed in MATLAB using the following \texttt{m-file} named \texttt{msolve}.
\lstinputlisting{msolve.m}
\bibliographystyle{plain}
\bibliography{Lit}
\end{document}